\theoremstyle{plain}
\newtheorem{theorem}{Theorem}[section]
\newtheorem*{theorem*}{Theorem}
\newtheorem{corollary}{Corollary}[section]
\newtheorem{lemma}{Lemma}[section]
\newtheorem{hypothesis}{Hypothesis}
\newtheorem*{conjecture*}{Conjecture}
\theoremstyle{definition}
\newtheorem*{definition*}{Definition}
\theoremstyle{remark}
\newtheorem*{remark*}{Remark}
\numberwithin{equation}{section}
\begin{document}
\raggedbottom 

\title[Primes in tuples and Romanoff's theorem]{Primes in tuples and Romanoff's theorem}

\author{Artyom Radomskii}

\begin{abstract}We obtain a lower bound for a number of primes in tuples. As applications, we obtain a lower bound for the Romanoff type representation functions.
\end{abstract}

 \address{HSE University, Moscow, Russia}

\keywords{Primes; the Bombieri-Vinogradov type theorem; Representation functions; Primes in tuples; Romanoff's theorem}

\email{artyom.radomskii@mail.ru}

\maketitle

\section{Introduction}

Let $\mathcal{L}=\{L_{1},\ldots, L_{k}\}$ be a set of distinct linear functions $L_{i}(n)=a_{i}n+b_{i}$, $i=1,\ldots, k,$ with coefficients in the positive integers. We recall that such a set is called \emph{admissible} if for every prime $p$ there is an integer $n_{p}$ such that $(\prod_{i=1}^{k}L_{i}(n_{p}), p)=1$. The famous \emph{Prime $k$-tuples conjecture} asserts that if $\mathcal{L}=\{L_{1},\ldots, L_{k}\}$ is admissible, then there are infinitely many integers $n$ such that all $L_{i}(n)$ ($1\leq i \leq k$) are prime.

Although such a conjecture remains far from resolution, recent progress \cite{Zhang}, \cite{Maynard1}, \cite{Maynard} has enabled to prove weak forms of this conjecture. It follows from results of Zhang \cite{Zhang} and Maynard \cite{Maynard1}, \cite{Maynard} that there are infinitely many integers $n$ such that \emph{several} (rather than \emph{all}) of the $L_{i}(n)$ are primes. Also, Maynard \cite{Maynard} obtained quantitative results in the case when the coefficients $a_i$, $b_i$ and the number $k$ of functions in $\mathcal{L}$ vary slightly with $x$.

Based on the Maynard technique, in \cite{Chen.Ding} Chen and Ding proved the following result.

\begin{theorem}[\mbox{\cite[Theorem 1.5]{Chen.Ding}}]\label{T.Ch.Ding}
Let $0<\alpha< 1/4$ and $\beta > 0$. Then there exist a positive constant $C_{0}$ depending only on $\alpha$ such that for sufficiently large number $x$ and $C_{0} \leq k \leq (\ln x)^{\alpha}$, if $\{a_{1}n+b_{1},\ldots, a_{k}n+b_{k}\}$ is a set of $k$ distinct linear functions with $(a_{i}, b_{i})=1,$ $0< a_{i}\leq (\ln x)^{\beta},$ $P^{+}(a_{i})\leq 0.9 \ln\ln x$ and $0< b_{i}\leq x^{\alpha}$ for all $1\leq i \leq k$, then
\[
\#\big\{x\leq n < 2x: \#(\{a_{1}n+b_{1},\ldots, a_{k}n+b_{k}\}\cap \mathbb{P}) \geq C_{0}^{-1} \ln k\big\}\gg
x\,\textup{exp}(-C_{0} (\ln x)^{\alpha}).
\]Here $P^{+}(n)$ denotes the largest prime factor of $n$.
\end{theorem}

 The following theorem is the main result of our paper.

\begin{theorem}\label{T1}
There are positive absolute constants $C$, $c$, $c_{0}$, and $c_{1}$ such that the following holds. Let $x\in \mathbb{R}$ and $k\in \mathbb{N}$ be such that $x\geq 10$, $k \geq C$ and $k \leq c(\ln x)^{1/4}(\ln\ln x)^{-1/2}$. Then there is a prime $p_{0}$ such that
\[
0.9\ln\ln x< p_{0}\leq \textup{exp} (c_1 \sqrt{\ln x})
\]and the following holds. Let $\mathcal{L}=\{L_{1},\ldots, L_{k}\}$ be a set of distinct linear functions $L_{i}(n)=a_{i}n+b_{i},$ $i=1,\ldots, k,$ with coefficients in the positive integers. Let the coefficients $L_{i}(n)=a_{i}n+b_{i}\in \mathcal{L}$ satisfy $(a_{i}, b_{i})=1,$
 \[
  a_{i}\leq \textup{exp}(c_{0}\sqrt{\ln x}),\qquad (a_{i}, p_0)=1,\qquad b_{i}\leq x\,\textup{exp}(c_{0}\sqrt{\ln x})
 \]for all $1\leq i \leq k$. Then
 \[
 \#\{x\leq n< 2x: \#(\{L_{1}(n),\ldots, L_{k}(n)\}\cap \mathbb{P}) \geq C^{-1}\ln k\}\geq \frac{x}{(\ln x)^{k}\textup{exp}(Ck)}.
 \]
\end{theorem}

 Our key ingredients are Lemma \ref{L3} and Lemma \ref{L4} which extend Proposition 2.1 in \cite{Chen.Ding}. Let us show that Theorem \ref{T.Ch.Ding} follows from Theorem \ref{T1}.
 \begin{proof}[Deduction of Theorem \ref{T.Ch.Ding} from Theorem \ref{T1}] Suppose that all assumptions of Theorem \ref{T.Ch.Ding} hold. Let $C$, $c$, $c_{0}$, and $c_{1}$ be the constants in Theorem \ref{T1}. Suppose that $x \geq x_{0}$, where $x_{0}$ is a constant depending only on $\alpha$ and $\beta$. This constant is large enough; it will be chosen later. We are going to deduce Theorem \ref{T.Ch.Ding} with $C_{0}=3C$. We note that we prove more than Theorem \ref{T.Ch.Ding} states, because $C_{0}$ is a positive absolute constant. We may assume that
 \[
 C< (\ln x)^{\alpha/2}< \frac{c (\ln x)^{1/4}}{(\ln\ln x)^{1/2}}
 \] for any $x\geq x_{0}$ (if $x_{0}$ is large enough). Let
 \[
 l= \min (k, \lfloor(\ln x)^{\alpha/2}\rfloor).
 \] Then
 \[
 C\leq l \leq (\ln x)^{\alpha/2}.
 \] Let $p_{0}$ be a prime number given by Theorem \ref{T1}. Since $P^{+}(a_{i})\leq 0.9 \ln\ln x$, we have $(a_i, p_0)=1$ for all $1 \leq i \leq k$. Also,
 \[a_{i}\leq (\ln x)^{\beta} \leq \textup{exp}(c_{0}\sqrt{\ln x}),\qquad
 b_{i} \leq x^{\alpha} \leq x\,\textup{exp}(c_0\sqrt{\ln x})
 \]for all $1\leq i \leq k$, if $x_{0}$ is large enough.

 Let $L_{i}(n)=a_{i}n+b_{i},$ $i=1,\ldots, k$. By Theorem \ref{T1} we have
 \begin{equation}\label{Ref_1}
 \#\{x\leq n< 2x: \#(\{L_{1}(n),\ldots, L_{l}(n)\}\cap \mathbb{P}) \geq C^{-1}\ln l\}\geq \frac{x}{(\ln x)^{l}\textup{exp}(Cl)}.
 \end{equation}We have
 \begin{align}
 (\ln x)^{l}\textup{exp}(Cl)&=\textup{exp}(l \ln\ln x + Cl)\notag\\
 &\leq \textup{exp}( (\ln x)^{\alpha/2}\ln\ln x + C(\ln x)^{\alpha/2})\leq \textup{exp}(C_{0}(\ln x)^{\alpha}),\label{Ref_2}
 \end{align}if $x_{0}$ is large enough.

 Let us show that
 \begin{equation}\label{ln_l_vs_ln_k}
 \ln l \geq \frac{\ln k}{3}.
 \end{equation} Indeed, if $C_{0}\leq k \leq \lfloor(\ln x)^{\alpha/2}\rfloor$, then $l=k$. Suppose that $\lfloor(\ln x)^{\alpha/2}\rfloor < k\leq (\ln x)^{\alpha}$. Then $l= \lfloor(\ln x)^{\alpha/2}\rfloor $. We have
 \[
 \frac{\alpha}{3}\ln\ln x\leq \ln k \leq \alpha \ln\ln x,\qquad \ln l\geq \frac{\alpha}{3}\ln\ln x,
 \]and \eqref{ln_l_vs_ln_k} is proved. Recall that $C_{0}=3C$. From \eqref{ln_l_vs_ln_k} we obtain
 \begin{align}
 \#\{x\leq n< 2x&: \#(\{L_{1}(n),\ldots, L_{l}(n)\}\cap \mathbb{P}) \geq C^{-1}\ln l\}\notag\\
 &\leq\#\{x\leq n< 2x: \#(\{L_{1}(n),\ldots, L_{l}(n)\}\cap \mathbb{P}) \geq C_{0}^{-1}\ln k\}\notag\\
 &\leq \#\{x\leq n< 2x: \#(\{L_{1}(n),\ldots, L_{k}(n)\}\cap \mathbb{P}) \geq C_{0}^{-1}\ln k\}.\label{Ref_3}
 \end{align}From \eqref{Ref_1}, \eqref{Ref_2} and \eqref{Ref_3} we get
 \[
 \#\{x\leq n< 2x: \#(\{L_{1}(n),\ldots, L_{k}(n)\}\cap \mathbb{P}) \geq C_{0}^{-1}\ln k\}\geq \frac{x}{\textup{exp}(C_{0}(\ln x)^{\alpha})}.
 \]This completes the proof of Theorem \ref{T.Ch.Ding} assuming Theorem \ref{T1}.
 \end{proof}

Also, we prove the following result.
\begin{theorem}\label{T2}
There are positive absolute constants $C$, $c$, $c_{1}$, and $c_2$ such that the following holds. Let $x\in \mathbb{R}$ and $k\in \mathbb{N}$ be such that $x\geq 10$, $k \geq C$ and $k \leq c(\ln x)^{1/4}(\ln\ln x)^{-1/2}$. Then there is a prime $p_{0}$ such that
\[
0.9\ln\ln x< p_{0}\leq \textup{exp} (c_1 \sqrt{\ln x})
\]and the following holds. Let $\mathcal{L}=\{L_{1},\ldots, L_{k}\}$ be a set of distinct linear functions $L_{i}(n)=a_{i}n-b_{i},$ $i=1,\ldots, k,$ with coefficients in the integers. Let the coefficients $L_{i}(n)=a_{i}n-b_{i}\in \mathcal{L}$ satisfy $(a_{i}, b_{i})=1,$
 \[
  1\leq a_{i}\leq \textup{exp}(c_{2}\sqrt{\ln x}),\qquad (a_{i}, p_0)=1,\qquad 1 \leq b_{i}\leq x
 \]for all $1\leq i \leq k$. Then
 \[
 \#\{x\leq n< 3x: \#(\{L_{1}(n),\ldots, L_{k}(n)\}\cap \mathbb{P}) \geq C^{-1}\ln k\}\geq \frac{x}{(\ln x)^{k}\textup{exp}(Ck)}.
 \]
\end{theorem}

Theorem \ref{T2} extends Theorem 1.1 in \cite{Chen.Ding} which showed the same result but with $a_{i}\leq 0.9\ln\ln x$, $b_{1}<\ldots<b_{k}\leq x$, and $k> (\ln x)^{\alpha}$, where $0<\alpha< 1/4$.

\begin{corollary}\label{C1}
There are positive absolute constants $C$ and $c$ such that the following holds. Let $x\in \mathbb{R}$ and $k\in \mathbb{N}$ be such that $x\geq 10$, $k \geq C$ and $k \leq c(\ln x)^{1/4}(\ln\ln x)^{-1/2}$. Let $a_{1},\ldots, a_{k}$ be distinct integers in the interval $[1,x]$. Then
 \[
 \#\{x\leq n< 3x: \#(\{n-a_{1},\ldots, n-a_{k}\}\cap \mathbb{P}) \geq C^{-1}\ln k\}\geq \frac{x}{(\ln x)^{k}\textup{exp}(Ck)}.
 \]
\end{corollary}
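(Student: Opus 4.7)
The plan is to deduce Corollary \ref{C1} directly from Theorem \ref{T2} by taking the linear functions to be $L_{i}(n) = n - a_{i}$ for $i = 1,\ldots, k$, so that in the notation of Theorem \ref{T2} the leading coefficient of each $L_i$ is $1$ and the constant term is $a_i$.

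First I would fix the absolute constants $C$, $c_1$, $c_2$ provided by Theorem \ref{T2} and let $p_0$ be the prime it supplies (depending only on $x$ and $k$, not on the $a_i$). Then I would verify the four hypotheses of Theorem \ref{T2} for the family $\mathcal{L} = \{L_1,\ldots, L_k\}$: the leading coefficient $1$ satisfies $\gcd(1, a_i) = 1$, $0 < 1 \leq \exp(c_2\sqrt{\ln x})$, and $\gcd(1, p_0) = 1$ trivially; the constant terms satisfy $0 < a_i \leq x$ by the hypothesis that $a_1,\ldots, a_k \in [1, x]$. Distinctness of the $L_i$ is immediate from distinctness of the $a_i$, since the $L_i$ share the same leading coefficient.

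With the hypotheses verified, the conclusion of Theorem \ref{T2} reads verbatim as the conclusion of Corollary \ref{C1} after substituting $L_i(n) = n - a_i$, which is the desired lower bound
\[
\#\{x\leq n< 3x: \#(\{n-a_{1},\ldots, n-a_{k}\}\cap \mathbb{P}) \geq C^{-1}\ln k\}\geq \frac{x}{(\ln x)^{k}\,\textup{exp}(Ck)}.
\]

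There is essentially no obstacle beyond bookkeeping: the only subtlety is to make sure the absolute constant $C$ in the corollary can be taken to be (a suitable enlargement of) the one from Theorem \ref{T2}, and that the range $k \geq C$, $k \leq (\ln x)^{1/5}$ in the corollary matches the range of validity of the theorem, which it does by construction.
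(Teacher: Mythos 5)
Your proof is correct and matches the intended derivation: the paper gives no separate proof of Corollary \ref{C1}, which indeed follows immediately from Theorem \ref{T2} by taking each $L_i(n)=n-a_i$, so that the leading coefficient is $1$ (which trivially satisfies the size, coprimality-with-$b_i$, and coprimality-with-$p_0$ conditions) and the constant term is the given integer $a_i\in[1,x]$.
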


Corollary \ref{C1} is related to Romanoff's theorem. Romanoff \cite{Romanoff} proved that for any integer $a>1$ the integers of the form $p+a^{k}$ have positive density ($p$ runs through primes, $k$ runs through positive integers). In other words, the number of integers $\leq x$ of the form $p+a^{k}$ is greater than $c(a)x$, where $c(a)$ is a positive constant depending only on $a$. Romanoff proved in fact the following result. Denote by $f(n)$ the number of solutions of $p+a^{k}=n$. Then
\begin{equation}\label{I:Rom.Ineq}
\sum_{n\leq x} f^{2}(n) \leq c_{2}(a)x.
\end{equation}The fact that the numbers of the form $p+a^{k}$ have positive density follows immediately from \eqref{I:Rom.Ineq}, the Cauchy-Schwarz inequality and the inequality
\[
\sum_{n\leq x} f(n) \geq c_{3}(a)x.
\] Given a set of positive integers $\mathcal{A}$, we can consider the Romanoff type representation function
\[
f_{\mathcal{A}}(n)=\#\{a\in \mathcal{A}: n-a \in \mathbb{P}\}.
\] Corollary \ref{C1} provides a quantitative result for such a function. Erd\H{o}s \cite{Erdos} made the following conjecture: if $\#(\mathcal{A}\cap [1,x])>\ln x$ for all sufficiently large $x$, then $\lim\sup f_{\mathcal{A}}(n)=\infty$. This conjecture was proved in \cite{Chen.Ding1}. Corollary \ref{C1} shows that $\lim \sup f_{\mathcal{A}}(n)=\infty$ provided that $\#(\mathcal{A}\cap [1,x])\to \infty$ as $x\to \infty$. It is easy to see that Corollary \ref{C1} extends Corollary 1.3 and Corollary 1.4 in \cite{Chen.Ding}. We emphasize that the parameter $k$ in Corollary \ref{C1} can be small compared with $(\ln x)^{\alpha}$, where $\alpha$ is a positive constant. In particular, if we put
\[
\mathcal{A}=\big\{2^{2^{n}}: n\in \mathbb{N}\big\}
\](which corresponds to the case when the parameter $k$ is of the order of $\ln\ln x$), then by Corollary \ref{C1} for all sufficiently large $x$ we have
\[
\#\{x\leq n< 3x: f_{\mathcal{A}}(n)\geq c_{1} \ln\ln\ln x\}\geq \frac{x}{\textup{exp}(c_{2}(\ln\ln x)^{2})},
\]where $c_{1}$ and $c_{2}$ are positive absolute constants.

\section{Notation}
We reserve the letter $p$ for primes. In particular, the sum $\sum_{p\leq K}$ should be interpreted as being over all prime numbers not exceeding $K$. Let $\#\mathcal{A}$ denote the number of elements of a finite set $\mathcal{A}$. By $\mathbb{Z}$, $\mathbb{N}$, and $\mathbb{R}$ we denote the sets of all integers, positive integers, and real numbers respectively. By $\mathbb{P}$ we denote the set of all prime numbers.

 Let $(a,b)$ be the greatest common divisor of integers $a$ and $b$, and $[a,b]$ be the least common multiple of integers $a$ and $b$. If $d$ is a divisor of $b-a$, we say that $b$ is congruent to $a$ modulo $d$, and write $b \equiv a$ (mod $d$). We denote the number of primes not exceeding $x$ by $\pi (x)$ and the number of primes not exceeding $x$ that are congruent to $a$ modulo $q$ by $\pi (x; q,a)$. Let
  \[
  \textup{li}(x):=\int_{2}^{x}\frac{dt}{\ln t}.
  \]Let $\varphi$ denote Euler's totient function, i.\,e. $\varphi(n)=\#\{1\leq m \leq n:\ (m,n)=1\}$. The symbol $b|a$ means that $b$ divides $a$. For a real number $x$ by $\lfloor x\rfloor$ we denote the largest integer not exceeding $x$, and by $\lceil x\rceil$ we denote the smallest integer $n$ such that $n\geq x$.

For real numbers $x,$ $y$ we also use $(x,y)$ to denote the open interval, and $[x,y]$ to denote the closed interval. The usage of the notation will be clear from the context.

 We shall view $0<\theta<1$ as a fixed real constant. Notation $A\ll B$ means that $|A|\leq c B$ with a positive constant which may depend on $\theta$ but no other variable, unless otherwise noted.

 \section{Well-distributed sets}

 Given a set of integers $\mathcal{A}$, a set of primes $\mathcal{P}$, and a linear function $L(n)=l_{1} n+l_{2}$, we define
 \begin{align*}
 \mathcal{A}(x)&=\{n\in \mathcal{A}: x\leq n< 2x\},&\qquad \mathcal{A}(x;q,a)&=\{n\in \mathcal{A}(x): n\equiv a\text{ (mod $q$)}\},\\
 L(\mathcal{A})&=\{L(n): n\in \mathcal{A}\},&\qquad \varphi_{L}(q)&=\varphi(|l_{1}|q)/\varphi(|l_{1}|),\\
 \mathcal{P}_{L,\mathcal{A}}(x)&=L(\mathcal{A}(x))\cap \mathcal{P},&\quad \mathcal{P}_{L,\mathcal{A}}(x;q,a)&=L(\mathcal{A}(x;q,a))\cap \mathcal{P}.
 \end{align*}

 We will focus on sets which satisfy the following hypothesis, which is given in terms of $(\mathcal{A}, \mathcal{L}, \mathcal{P}, B, x, \theta)$ for $\mathcal{L}$ a set of linear functions, $B\in \mathbb{N}$, $x$ a large real number, and $0<\theta <1$.

\begin{hypothesis}\label{Hypothesis_1}
$(\mathcal{A}, \mathcal{L}, \mathcal{P}, B, x, \theta)$. Let $k=\# \mathcal{L}$.\par\smallskip
\textup{(1)} $\mathcal{A}$ is well-distributed in arithmetic progressions\textup{:} we have
\begin{equation}\label{Hyp.1}
\sum_{1\leq q \leq x^{\theta}}\max_{a\in \mathbb{Z}}\biggl|\#\mathcal{A}(x; q, a) - \frac{\# \mathcal{A}(x)}{q}\biggr|\ll \frac{\#\mathcal{A}(x)}{(\ln x)^{100 k^{2}}}.
\end{equation}

\textup{(2)} Primes in $L(\mathcal{A})\cap \mathcal{P}$ are well-distributed in most arithmetic progressions\textup{:} for any $L\in \mathcal{L}$ we have
\begin{equation}\label{Hyp.2}
\sum_{\substack{1\leq q\leq x^{\theta}\\ (q, B)=1}} \max_{\substack{a\in \mathbb{Z}\\(L(a), q)=1}}\biggl| \#\mathcal{P}_{L,\mathcal{A}}(x; q, a) - \frac{\#\mathcal{P}_{L,\mathcal{A}}(x)}{\varphi_{L}(q)}\biggr|\ll \frac{\#\mathcal{P}_{L,\mathcal{A}}(x)}{(\ln x)^{100 k^{2}}}.
\end{equation}

\textup{(3)} $\mathcal{A}$ is not too concentrated in any arithmetic progression\textup{:} for any $1\leq q< x^{\theta}$ we have
\begin{equation}\label{Hyp.3}
\max_{a\in\mathbb{Z}}\#\mathcal{A}(x; q, a) \ll \frac{\#\mathcal{A}(x)}{q}.
\end{equation}
\end{hypothesis}

 \section{Preparatory Lemmas}

 \begin{lemma}\label{L2}
 There is a positive absolute constant $d$ such that the following holds. Let $k$ be a positive integer with $k\geq 2$. Let $\mathcal{L}=\{L_{1},\ldots, L_{k}\}$ be a set of distinct linear functions $L_{i}(n)=a_{i}n+b_{i},$ $i=1,\ldots, k,$ with coefficients in the positive integers. Let $(a_i, b_i)=1$ for any $1\leq i \leq k$. Then there is a sequence of positive integers $i_{1}<\ldots <i_{s}\leq k$ such that
 \[
 s= \bigg\lceil d\,\frac{k}{\ln k} \bigg\rceil
 \] and the set $\mathcal{L}^{\prime}=\{L_{i_{1}},\ldots, L_{i_{s}}\}$ is admissible.
 \end{lemma}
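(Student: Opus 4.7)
The plan is to select $\mathcal{L}'$ probabilistically via a single random residue modulo a small primorial, in the spirit of Maynard's admissibility construction. Set $s=\lceil dk/\ln k\rceil$ for a small absolute constant $d>0$ to be fixed at the end. If $d$ is so small that $s=1$, a single function $L_{i}$ is automatically admissible: for primes $p\mid a_{i}$ we have $L_{i}(n)\equiv b_{i}\not\equiv 0\pmod{p}$ for every $n$ thanks to $(a_{i},b_{i})=1$, while for $p\nmid a_{i}$ the unique root modulo $p$ is avoidable since $p\geq 2$. Hence we may assume $s\geq 2$, which in particular forces $k$ to exceed some absolute threshold.

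Let $P=\prod_{p\leq s}p$ and draw $n$ uniformly from $\{0,1,\ldots,P-1\}$. For each $i$ put $X_{i}=\mathbf{1}[(L_{i}(n),P)=1]$. By the Chinese remainder theorem applied to the squarefree modulus $P$, together with $(a_{i},b_{i})=1$,
\[
\mathbb{E}[X_{i}]=\prod_{\substack{p\leq s\\ p\nmid a_{i}}}\Bigl(1-\tfrac{1}{p}\Bigr)\;\geq\;\prod_{p\leq s}\Bigl(1-\tfrac{1}{p}\Bigr)\;\geq\;\frac{c_{0}}{\ln(s+1)},
\]
with an absolute constant $c_{0}>0$ supplied by Mertens' theorem. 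Summing over $i$ and using $s\leq k$, the expectation $\mathbb{E}[\sum_{i}X_{i}]$ is at least $c_{1}k/\ln k$ for some absolute $c_{1}>0$. Therefore some specific $n_{0}\in\{0,\ldots,P-1\}$ achieves $\#\{i:X_{i}(n_{0})=1\}\geq c_{1}k/\ln k$. Choosing $d\leq c_{1}/2$ forces this count to be at least $s$ (since $s\geq 2$ implies $\lceil dk/\ln k\rceil\leq 2dk/\ln k$), and we take $i_{1}<\cdots<i_{s}$ to be any $s$ of the surviving indices.

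It remains to verify admissibility of $\mathcal{L}'=\{L_{i_{1}},\ldots,L_{i_{s}}\}$. Fix a prime $p$. If $p\leq s$, then $n_{0}$ itself serves as $n_{p}$ in the definition of admissibility, because the condition $(L_{i_{j}}(n_{0}),P)=1$ in particular forces $p\nmid L_{i_{j}}(n_{0})$ for all $j$. If instead $p>s$, then each $L_{i_{j}}$ contributes at most one forbidden residue class modulo $p$ (none if $p\mid a_{i_{j}}$, since then $L_{i_{j}}\equiv b_{i_{j}}\not\equiv 0\pmod{p}$), so the set of residues to avoid has cardinality at most $s<p$, and an admissible $n_{p}$ exists by trivial counting.

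I anticipate no serious obstacle: the whole argument is a one-line application of Mertens combined with averaging. The only bookkeeping is (i) pinning down $d$ independently of $k$, handled by separating the degenerate regime $s=1$ from the main regime $s\geq 2$, and (ii) observing that the estimate $\mathbb{E}[X_{i}]\geq c_{0}/\ln(s+1)$ is completely insensitive to the specific values of $a_{i}$ and $b_{i}$, which gives uniformity of the conclusion over all admissible coefficient data.
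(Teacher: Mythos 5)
Your argument is correct. The paper itself does not give a proof of this lemma; it simply cites Lemma 2.7 of Chen and Ding, so there is no in-text proof to compare against, but your probabilistic averaging construction is precisely the standard way such admissible-subset extraction lemmas are proved (it goes back at least to Maynard's ``Dense clusters of primes in subsets'' and is almost certainly what the cited lemma does too). Two tiny remarks, neither of which is a gap: (i) the claim that the estimate is ``completely insensitive to the specific values of $a_i$ and $b_i$'' should be read as uniformity over all coefficients with $(a_i,b_i)=1$, since that hypothesis is genuinely used at the primes $p\mid a_i$; (ii) in passing from $\mathbb{E}[\sum_i X_i]\geq c_0 k/\ln(s+1)$ to $c_1 k/\ln k$ you implicitly use $s\leq k$ and $\ln(k+1)\leq 2\ln k$ for $k\geq 2$, which holds, and the choice $d\leq c_1/2$ then also guarantees $s\leq k$, so the selection of $s$ surviving indices is legitimate. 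The degenerate case $s=1$ is handled correctly, and the verification of admissibility — primes $p\leq s$ via the chosen residue $n_0$, primes $p>s$ by counting at most $s<p$ forbidden classes — is complete.
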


 \begin{proof} This is \cite[Lemma 2.7]{Chen.Ding}.
 \end{proof}

 \begin{lemma}\label{L3}
  Let $\varepsilon$ be a real number with $0 < \varepsilon < 1/2$. Then there is a positive constant $c(\varepsilon)$, depending only on $\varepsilon$, such that the following holds. Let $x$ be a real number with $x\geq c(\varepsilon)$ Then there is a prime $B$ such that
 \[
 0.9 \ln\ln x< B \leq \textup{exp} (c_1 \sqrt{\ln x})
\]and
\begin{equation}\label{L3:Part2}
\sum_{\substack{1\leq r\leq x^{1/2 - \varepsilon}\\
(r,B)=1}} \max_{2 \leq u \leq x^{1+ \gamma/\sqrt{\ln x}}}\max_{\substack{b\in \mathbb{Z}:\\ (b,r)=1}}
\biggl|\pi (u; r, b)-\frac{\textup{li}(u)}{\varphi (r)}\biggr| \leq c_2 x\,\textup{exp}(-c_3\sqrt{\ln x}).
\end{equation}Here $c_1$, $c_2$, $c_3$, and $\gamma$ are positive absolute constants.
 \end{lemma}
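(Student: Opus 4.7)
The plan is to establish Lemma \ref{L3} as a refined Bombieri--Vinogradov theorem, where the coprimality condition $(r,B)=1$ with a well-chosen prime $B$ is what allows us to circumvent a potential Landau--Page exceptional character and thereby upgrade the classical error term $x/(\ln x)^{A}$ to the sharper $x\exp(-c_{3}\sqrt{\ln x})$.

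First I would apply the Landau--Page theorem with $Q:=\exp(c_{1}\sqrt{\ln x})$: there is at most one primitive real character $\chi^{*}$ of conductor $q^{*}\leq Q$ whose $L$-function has an exceptional real zero $\beta^{*}\geq 1-c/\ln Q$. I would then choose $B$ based on $q^{*}$. Since the conductor of a real primitive character is essentially squarefree (a divisor of $8D$ for some odd squarefree $D$), if $q^{*}$ has no prime factor exceeding $0.9\ln\ln x$ then Mertens' theorem gives
\[
q^{*}\ \leq\ 8\!\!\prod_{p\leq 0.9\ln\ln x}\!\! p\ =\ \exp\bigl(0.9\ln\ln x\cdot(1+o(1))\bigr)\ \leq\ (\ln x)^{0.9+o(1)},
\]
and Siegel's theorem (with $\varepsilon'=1/2$, say) then yields $\beta^{*}\leq 1-c(\varepsilon')/(\ln x)^{0.45+o(1)}$, so that $x^{\beta^{*}}\ll x\exp\bigl(-c'(\ln x)^{0.55}\bigr)$, amply negligible. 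In this case, or when $q^{*}$ does not exist, I take $B$ to be any prime in $(0.9\ln\ln x,Q]$, whose existence follows from the prime number theorem. Otherwise $q^{*}$ has a prime factor in the target range, and I take $B$ to be such a factor: then $B\mid q^{*}$, so $(r,B)=1$ forces $q^{*}\nmid r$ and eliminates the exceptional contribution at the level of the sum.

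With $B$ fixed I would then run the standard Bombieri--Vinogradov machinery: Vaughan's identity for $\Lambda(n)$, the large-sieve inequality for Dirichlet characters, and the Siegel--Walfisz theorem for the contribution of small moduli. The key feature here is that the modulus range $r\leq x^{1/2-\varepsilon}$ is strictly smaller than the classical $x^{1/2}/(\ln x)^{A}$: the large-sieve contribution of shape $Qx^{1/2}(\ln x)^{O(1)}$ is then at most $x^{1-\varepsilon}(\ln x)^{O(1)}\leq x\exp(-c_{3}\sqrt{\ln x})$ for $x$ large, and combined with the exclusion (or smallness) of the Landau--Page exception this gives the required bound. To extend the inner maximum from $u\leq x$ up to $u\leq x^{1+\gamma/\sqrt{\ln x}}$, I apply the bound with $X:=x^{1+\gamma/\sqrt{\ln x}}$ in place of $x$: since $\sqrt{\ln X}=\sqrt{\ln x}+O(\gamma)$, one has $X\exp(-c\sqrt{\ln X})=x\exp\bigl((\gamma-c+o(1))\sqrt{\ln x}\bigr)$, still of the required form once $\gamma$ is chosen small relative to $c_{3}$. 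The transition from Chebyshev $\psi$-type sums to the $\pi$-type quantities in \eqref{L3:Part2} (with $\textup{li}(u)/\varphi(r)$ in place of $u/\varphi(r)$) is by standard partial summation.

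The main obstacle is the treatment of the Landau--Page exception: the ``smooth $q^{*}$'' case forces us to invoke Siegel's theorem, so the resulting constants $c_{1},c_{2},c_{3},\gamma$ are ineffective, which is acceptable since the lemma asserts them only as absolute constants. The rest of the argument---the BV machinery, the extension of the $u$-range, and the passage from $\psi$ to $\pi$---is routine bookkeeping.
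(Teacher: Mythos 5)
Your proposal is correct and follows the same overall strategy as the paper: invoke Landau--Page to isolate a potential exceptional modulus, use the structure of conductors of real primitive characters (a divisor of $8D$ with $D$ odd squarefree) together with Mertens to constrain that modulus when it is $0.9\ln\ln x$-smooth, pick $B$ to be a large prime factor of the exceptional modulus when one exists (else any prime in the range), and then run the Bombieri--Vinogradov machinery with the exceptional zero excluded to get the superlogarithmic saving $x\exp(-c_{3}\sqrt{\ln x})$ and the extended $u$-range.

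The one place where you diverge from the paper is the treatment of the ``$q^{*}$ smooth'' case. You keep this case alive, take $B$ to be an arbitrary prime in the range, and argue via Siegel's theorem that the exceptional contribution is negligible because $x^{\beta^{*}}$ is tiny. The paper instead shows \emph{effectively} that this case is vacuous: it uses the class-number-formula bound $\beta_{0}<1-c/(\sqrt{q_{0}}(\ln q_{0})^{2})$ together with the Landau--Page threshold $\beta_{0}>1-c_{4}(\ln x)^{-1/2}$ to derive the lower bound $q_{0}\gg\ln x/(\ln\ln x)^{4}$, and then the Mertens bound shows $q_{0}$ cannot be $0.9\ln\ln x$-smooth; hence the largest prime factor of $q_{0}$ always exceeds $0.9\ln\ln x$. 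Both arguments are valid (the lemma asserts only absolute, not effectively computable, constants), but the paper's route has two advantages: it keeps the constants effective, and it avoids the need to estimate the explicit exceptional-zero contribution to $\pi(u;r,b)$ in the remaining moduli. Note also that your Siegel bound $\beta^{*}\leq 1-c'(\ln x)^{-0.45+o(1)}$ is actually \emph{incompatible} with the Landau--Page hypothesis $\beta^{*}>1-c/\ln Q\gg 1-(\ln x)^{-1/2}$ for $x$ large, so your ``smooth'' case is in fact empty as well; you could shorten your argument by drawing that conclusion directly rather than estimating a contribution from a zero that cannot exist.

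One small detail: for the existence of a prime in $(0.9\ln\ln x, \exp(c_{1}\sqrt{\ln x})]$ the paper appeals to Bertrand's postulate, which is weaker and entirely sufficient for such a long interval; invoking the prime number theorem as you do is correct but heavier than needed. The remaining bookkeeping (large sieve plus Siegel--Walfisz range, passage from $\psi$ to $\pi$, extension of the $u$-range to $x^{1+\gamma/\sqrt{\ln x}}$) is described at the right level of detail, matching what the paper defers to Lemmas 4.9--4.10 of \cite{Radomskii}.
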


 \begin{proof}We assume that $c(\varepsilon)$ is large enough; this constant will be chosen later. By the Landau-Page theorem (see, for example, \cite[Chapter 14]{Davenport}) there is at most one modulus $q_{0}\in [3, \textup{exp}(2 c_{1} \sqrt{\ln x})]$ such that there exists a primitive character $\chi$ modulo $q_{0}$ for which $L(s,\chi)$ has a real zero larger than $1-c_{4}(\ln x)^{-1/2}$ (for suitable positive absolute constants $c_{1},$ $c_{4}$). If this exceptional modulus $q_{0}$ exists, we take $B$ to be the largest prime factor of $q_{0}$, and otherwise we take $B$ to be the smallest prime on the interval $(0.9 \ln\ln x, \textup{exp} (2c_1 \sqrt{\ln x})]$, which exists by Bertrand's postulate (see, for example, \cite[Theorem 3.1.9]{Murty}).

 Suppose that $q_{0}$ exists. We have (see, for example, \cite[Chapter 20]{Davenport})
 \begin{equation}\label{L3:q0.grow}
 q_{0}\geq c_{5} \frac{\ln x}{(\ln\ln x)^{4}},
 \end{equation}where $c_{5}$ is a positive absolute constant (provided that $c(\varepsilon)$ is large enough). Hence, we may assume that $q_{0}\geq 100$. It is well-known, the number $q_{0}$ is of the form $2^{\alpha} q_{1}$, where $\alpha \in \{0,\ldots, 3\}$ and $q_{1}\geq 3$ is an odd square-free integer (see, for example, \cite[Lemma 4.5]{Radomskii}). Let us show that
 \begin{equation}\label{L3:B.grow}
 B>0.9\ln\ln x.
 \end{equation}Assume the converse $B\leq 0.9\ln\ln x$. Hence,
 \[
 q_{0}\leq 4\prod_{p\leq 0.9\ln\ln x} p.
 \]We have
 \[
 \sum_{p\leq 0.9\ln\ln x}\ln p\leq 0.99 \ln\ln x,
 \]provided that $c(\varepsilon)$ is large enough. We obtain
 \[
 q_{0}\leq 4 (\ln x)^{0.99},
 \]but this contradicts \eqref{L3:q0.grow}. Hence, \eqref{L3:B.grow} is proved. Also,
 \[
 B\leq q_{0} \leq \textup{exp}(2 c_{1} \sqrt{\ln x}).
 \] The proof of \eqref{L3:Part2} repeats the proof of Lemma 4.9 and Lemma 4.10 in \cite{Radomskii}.
 \end{proof}

 \begin{lemma}\label{L4}
 Let $\mathcal{A}= \mathbb{N}$, $\mathcal{P}=\mathbb{P}$, $\theta=1/3$. Let $x\in \mathbb{N}$, $k\in \mathbb{N}$ be such that $x \geq x_{0}$, $k\leq \eta(\ln x)^{1/4}(\ln\ln x)^{-1/2}$. Then there is a prime $B$  such that
 \begin{equation}\label{L4:B.range}
 0.9 \ln\ln x< B \leq \textup{exp} (c_1 \sqrt{\ln x})
 \end{equation}and the following holds. Let $\mathcal{L}=\{L_{1},\ldots, L_{k}\}$ be an admissible set of $k$ linear functions. Let the coefficients $L_{i}(n)=a_{i}n+b_{i}\in \mathcal{L}$ satisfy
 \[
  1\leq a_{i}\leq \textup{exp}(c_{0}\sqrt{\ln x}),\qquad (a_{i},B)=1,\qquad 1\leq b_{i}\leq x\,\textup{exp}(c_{0}\sqrt{\ln x})
 \]for all $1\leq i \leq k$. Then $(\mathcal{A}, \mathcal{L}, \mathcal{P}, B, x, \theta)$ satisfy Hypothesis \ref{Hypothesis_1}; the implied constants in \eqref{Hyp.1}--\eqref{Hyp.3} are positive and absolute. Here $x_{0},$ $c_{0}$, $c_{1}$, and $\eta$ are positive absolute constants.
 \end{lemma}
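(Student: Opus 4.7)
The plan is to verify Hypothesis \ref{Hypothesis_1} directly, taking $B$ to be the prime produced by Lemma \ref{L3} with $\varepsilon=1/12$ (for instance). Parts (1) and (3) are essentially trivial because $\mathcal{A}=\mathbb{N}$: one has $\#\mathcal{A}(x;q,a)=x/q+O(1)$ uniformly, so the sum in \eqref{Hyp.1} is $\ll x^{1/3}$, which is easily beaten by $\#\mathcal{A}(x)/(\ln x)^{100k^{2}}$ since $k\leq(\ln x)^{1/5}$ forces $(\ln x)^{100k^{2}}\leq\exp(100(\ln x)^{2/5}\ln\ln x)=x^{o(1)}$; and \eqref{Hyp.3} is immediate from $x/q+1\ll x/q$ when $q\leq x^{1/3}$.

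All of the work is in Part (2). Fix $L(n)=a_{L}n+b_{L}\in\mathcal{L}$. Admissibility at any prime dividing $a_{L}$ forces $(a_{L},b_{L})=1$, and hence $(L(a),a_{L})=(b_{L},a_{L})=1$ for every $a\in\mathbb{Z}$. When $(L(a),q)=1$, primes $p=L(n)$ with $n\in[x,2x)$ and $n\equiv a\pmod q$ correspond bijectively to primes $p\equiv L(a)\pmod{a_{L}q}$ in the interval $[a_{L}x+b_{L},2a_{L}x+b_{L})$, giving
\[
\#\mathcal{P}_{L,\mathcal{A}}(x;q,a)=\pi(2a_{L}x+b_{L};a_{L}q,L(a))-\pi(a_{L}x+b_{L};a_{L}q,L(a)),
\]
and the analogous identity (with modulus $a_{L}$ and residue $b_{L}$) for $\#\mathcal{P}_{L,\mathcal{A}}(x)$. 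Writing $\delta(u):=\pi(u;a_{L}q,L(a))-\textup{li}(u)/\varphi(a_{L}q)$ and $\delta'(u):=\pi(u;a_{L},b_{L})-\textup{li}(u)/\varphi(a_{L})$ and using $\varphi_{L}(q)=\varphi(a_{L}q)/\varphi(a_{L})$, one obtains the clean decomposition
\[
\#\mathcal{P}_{L,\mathcal{A}}(x;q,a)-\frac{\#\mathcal{P}_{L,\mathcal{A}}(x)}{\varphi_{L}(q)}=\bigl[\delta(2a_{L}x+b_{L})-\delta(a_{L}x+b_{L})\bigr]-\frac{\varphi(a_{L})}{\varphi(a_{L}q)}\bigl[\delta'(2a_{L}x+b_{L})-\delta'(a_{L}x+b_{L})\bigr].
\]
After taking absolute values, maxima over admissible $a$, and summing over $q\leq x^{1/3}$ with $(q,B)=1$, the $\delta$-contribution becomes a sum indexed by $r=a_{L}q$ whose terms fit inside the Bombieri--Vinogradov sum of Lemma \ref{L3}; the $\delta'$-contribution is a single modulus $r=a_{L}$, bounded individually by Lemma \ref{L3}, times $\sum_{q}1/\varphi_{L}(q)\ll\ln x$.

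To apply Lemma \ref{L3} I must check the two range conditions $r=a_{L}q\leq x^{1/2-\varepsilon}$ and $u\in\{a_{L}x+b_{L},2a_{L}x+b_{L}\}\leq x^{1+\gamma/\sqrt{\ln x}}$. Given $a_{L}\leq\exp(c_{0}\sqrt{\ln x})$, $b_{L}\leq x\exp(c_{0}\sqrt{\ln x})$, and $q\leq x^{1/3}$, both hold for large $x$ provided $c_{0}<\gamma$ and $c_{0}$ is small enough in terms of $\varepsilon=1/12$; the coprimality $(a_{L}q,B)=1$ follows from $(a_{L},B)=1$ combined with $(q,B)=1$. Putting everything together, the LHS of \eqref{Hyp.2} is $\ll x\exp(-c_{3}\sqrt{\ln x})\ln x$. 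On the other side, applying Lemma \ref{L3} with modulus $r=a_{L}$ gives $\#\mathcal{P}_{L,\mathcal{A}}(x)\gg x/\ln x$, its main term $(\textup{li}(2a_{L}x+b_{L})-\textup{li}(a_{L}x+b_{L}))/\varphi(a_{L})$ being of that order since $a_{L}\leq\exp(c_{0}\sqrt{\ln x})$ and the $\delta'$ error is exponentially small. The required bound $\ll\#\mathcal{P}_{L,\mathcal{A}}(x)/(\ln x)^{100k^{2}}$ thus reduces to $(100k^{2}+2)\ln\ln x\ll\sqrt{\ln x}$, which holds comfortably for $k\leq(\ln x)^{1/5}$ because $(\ln x)^{2/5}\ln\ln x=o(\sqrt{\ln x})$. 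The one substantive obstacle is precisely the bookkeeping in this balance: $c_{0}$ must be chosen small enough that the composite modulus $a_{L}q$ stays inside Lemma \ref{L3}'s range, and this is why the choice $\theta=1/3$ works---it leaves enough slack between $x^{1/3}$ and $x^{1/2}$ to absorb the factor $a_{L}\leq\exp(c_{0}\sqrt{\ln x})$.
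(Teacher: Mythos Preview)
Your proposal is correct and follows essentially the same route as the paper: invoke Lemma~\ref{L3} (the paper takes $\varepsilon=0.1$ rather than your $1/12$, but either works), handle \eqref{Hyp.1} and \eqref{Hyp.3} trivially via $\#\mathcal{A}(x;q,a)=x/q+O(1)$, and for \eqref{Hyp.2} split the discrepancy into the same four pieces (your $\delta$--$\delta'$ decomposition is exactly the paper's $S_1+S_3$ and $S_2+S_4$), push the $\delta$-part into the Bombieri--Vinogradov sum with modulus $a_Lq$, bound the $\delta'$-part by a single-modulus estimate times $\sum_q 1/\varphi_L(q)\ll\ln x$, and finish with the lower bound $\#\mathcal{P}_{L,\mathcal{A}}(x)\gg x/\ln x$ and the observation $(\ln x)^{100k^2}=x^{o(1)}$.
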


 \begin{proof} We assume that $x_{0}$ is large enough and $c_{0}$ is small enough; we will choose the constants $x_{0}$ and $c_{0}$ later. Let $c(\varepsilon)$ be the constant in Lemma \ref{L3}. We take $\varepsilon = 0.1$, and let  $x_{0}\geq c(0.1)$. By Lemma \ref{L3}, there is a prime $B$ such that
 \[
 0.9 \ln\ln x< B \leq \textup{exp} (c_1 \sqrt{\ln x})
 \] and
\begin{equation}\label{L4:Bombieri-Vinogradov}
\sum_{\substack{1\leq r\leq x^{0.4}\\
(r,B)=1}} \max_{2 \leq u \leq x^{1+ \gamma/\sqrt{\ln x}}}\max_{\substack{b\in \mathbb{Z}:\\ (b,r)=1}}
\biggl|\pi (u; r, b)-\frac{\textup{li}(u)}{\varphi (r)}\biggr| \leq c_2 x\,\textup{exp}(-c_3\sqrt{\ln x}),
\end{equation}where $c_1$, $c_2$, $c_3$, and $\gamma$ are positive absolute constants. We assume that $c_{0}\leq \gamma / 2$. Let us show that $(\mathcal{A}, \mathcal{L}, \mathcal{P}, B, x, \theta)$ satisfy Hypothesis \ref{Hypothesis_1}.

I) Let $L(n)=l_{1} n+ l_{2}\in \mathcal{L}$. Let us show that
\begin{equation}\label{L4:Basic.I}
S:=\sum_{\substack{1 \leq r \leq x^{1/3}\\ (r, B)=1}}\max_{\substack{b\in \mathbb{Z}\\ (L(b), r)=1}}
\biggl|\#\mathcal{P}_{L, \mathcal{A}}(x; r, b) - \frac{\#\mathcal{P}_{L, \mathcal{A}}(x)}{\varphi_{L}(r)}\biggr|
\leq \frac{\#\mathcal{P}_{L, \mathcal{A}}(x)}{(\ln x)^{100 k^{2}}}.
\end{equation}It is easy to see that
\begin{gather*}
\mathcal{P}_{L, \mathcal{A}}(x)=\{ l_{1}x+ l_{2} \leq p < 2l_{1}x+ l_{2}: p\equiv l_{2}\ \text{(mod $l_{1}$)}\},\\
\mathcal{P}_{L, \mathcal{A}}(x; r, b)=\{l_{1}x + l_{2}\leq p < 2l_{1}x+l_{2}: p\equiv l_{1}b+l_{2}\ \text{(mod $l_{1}r$)}\},
\end{gather*}and hence
\begin{gather*}
\#\mathcal{P}_{L, \mathcal{A}}(x)= \pi(2l_{1}x+l_{2}-1; l_1, l_2) -
\pi(l_{1}x+l_{2}-1; l_1, l_2),\label{Number_P_L_A_for_T6}\\
\#\mathcal{P}_{L, \mathcal{A}}(x; r, b)=\pi(2l_{1}x+l_{2}-1; l_{1}r, L(b))-
\pi(l_{1}x+l_{2}-1; l_{1}r, L(b)).\notag
\end{gather*}We obtain
\begin{align*}
S=\sum_{\substack{1 \leq r \leq x^{1/3}\\ (r, B)=1}}&\max_{\substack{b\in \mathbb{Z}\\ (L(b), r)=1}}
\biggl|\pi(2l_{1}x+l_{2}-1; l_{1}r, L(b))-
\pi(l_{1}x+l_{2}-1; l_{1}r, L(b)) \\
&- \frac{\pi(2l_{1}x+l_{2}-1; l_1, l_2) -
\pi(l_{1}x+l_{2}-1; l_1, l_2)}{\varphi(l_{1}r)/\varphi(l_{1})}\biggr|\leq
 S_{1}+S_{2}+S_{3}+S_{4},
\end{align*}where
\begin{align*}
S_{1} &= \sum_{\substack{1 \leq r \leq x^{1/3}\\ (r, B)=1}}\max_{\substack{b\in \mathbb{Z}\\ (L(b), r)=1}}
\biggl| \pi(l_{1}x+l_{2}-1; l_{1}r, L(b)) - \frac{\textup{li}(l_{1}x+l_{2}-1)}{\varphi(l_{1}r)}\biggr|,\\
S_{2} &= \sum_{\substack{1 \leq r \leq x^{1/3}\\ (r, B)=1}}
\biggl|\frac{\pi(l_{1}x+l_{2}-1; l_{1}, l_{2})}{\varphi(l_{1}r)/\varphi(l_{1})} -
\frac{\textup{li}(l_{1}x+l_{2}-1)}{\varphi(l_{1}r)} \biggr|,\\
S_{3} &= \sum_{\substack{1 \leq r \leq x^{1/3}\\ (r, B)=1}}\max_{\substack{b\in \mathbb{Z}\\ (L(b), r)=1}}
\biggl| \pi(2l_{1}x+l_{2}-1; l_{1}r, L(b)) - \frac{\textup{li}(2l_{1}x+l_{2}-1)}{\varphi(l_{1}r)}\biggr|,\\
S_{4} &= \sum_{\substack{1 \leq r \leq x^{1/3}\\ (r, B)=1}}
\biggl|\frac{\pi(2l_{1}x+l_{2}-1; l_{1}, l_{2})}{\varphi(l_{1}r)/\varphi(l_{1})} -
\frac{\textup{li}(2l_{1}x+l_{2}-1)}{\varphi(l_{1}r)} \biggr|.
\end{align*}

Let us show that
\begin{equation}\label{L(b)_coprime_l1}
(L(b), l_{1})=1
\end{equation}for any $b\in \mathbb{Z}$. Assume the converse: there is an integer $b$ such that $(L(b), l_{1})>1$. Then there is a prime $p$ such that $p|l_{1}$ and $p|L(b)$. Hence $p|l_{2}$, and we see that $p|L(n)$ for any integer $n$. Since $L\in \mathcal{L}$, we see that $p|L_{1}(n)\cdots L_{k}(n)$ for any integer $n$. But this contradicts the fact that $\mathcal{L}=\{L_{1},\ldots, L_{k}\}$ is an admissible set. Thus, \eqref{L(b)_coprime_l1} is proved.

 We observe that since $l_1= a_{i}$ for some $1\leq i \leq k$, we have
\begin{equation}\label{L4:B.coprime.l1}
(B,l_{1})=1.
\end{equation}We have
\begin{gather}
l_{1} x^{1/3}\leq  x^{1/3} \textup{exp}(c_{0}\sqrt{\ln x})\leq x^{1/3} \textup{exp}((\gamma/2)\sqrt{\ln x})  \leq x^{0.4},\notag\\
l_{1}x+l_{2}-1\geq l_{1}x\geq x\geq 2,\notag\\
2 l_{1}x+ l_{2} -1 \leq 3 x\,\textup{exp}(c_{0}\sqrt{\ln x}) \leq 3 x\,\textup{exp}((\gamma/2)\sqrt{\ln x}) \leq x^{1+\gamma/\sqrt{\ln x}},\label{L4:2l1.up}
\end{gather}if $x_{0}$ is large enough. We obtain (see \eqref{L(b)_coprime_l1}, \eqref{L4:B.coprime.l1} and \eqref{L4:Bombieri-Vinogradov})
\begin{align*}
S_{1} &= \sum_{\substack{r:\\ l_{1} \leq l_{1}r \leq l_{1}x^{1/3}\\ (l_{1}r, B)=1}}\max_{\substack{b\in \mathbb{Z}\\ (L(b), l_{1}r)=1}}
\biggl| \pi(l_{1}x+l_{2}-1; l_{1}r, L(b)) - \frac{\textup{li}(l_{1}x+l_{2}-1)}{\varphi(l_{1}r)}\biggr|\\
&\leq \sum_{\substack{1\leq Q\leq x^{0.4}\\
(Q,B)=1}} \max_{2 \leq u \leq x^{1+ \gamma/\sqrt{\ln x}}}\max_{\substack{W\in \mathbb{Z}:\\ (W,Q)=1}}
\biggl|\pi (u; Q, W)-\frac{\textup{li}(u)}{\varphi (Q)}\biggr| \leq c_2 x\,\textup{exp}(-c_3\sqrt{\ln x}).
\end{align*}It is well-known (see, for example, \cite[Chapter 28]{Davenport}),
 \[
 \sum_{n\leq x}\frac{1}{\varphi(n)}\leq c \ln x
 \]for any real $x\geq 2$, where $c$ is a positive absolute constant. Since $\varphi(mn)\geq \varphi (m) \varphi (n)$ for any positive integers $m$ and $n$, we obtain
\begin{align*}
S_{2}&=\varphi(l_{1})\biggl|\pi(l_{1}x+l_{2}-1; l_{1}, l_{2})- \frac{\textup{li}(l_{1}x+l_{2}-1)}{\varphi(l_{1})}\biggr|
\sum_{\substack{1 \leq r \leq x^{1/3}\\ (r, B)=1}}\frac{1}{\varphi (l_{1}r)}\\
&\leq \biggl|\pi(l_{1}x+l_{2}-1; l_{1}, l_{2})- \frac{\textup{li}(l_{1}x+l_{2}-1)}{\varphi(l_{1})}\biggr|
\sum_{1 \leq r \leq x^{1/3}}\frac{1}{\varphi (r)}\\
&\leq \widetilde{c}\ln x \biggl|\pi(l_{1}x+l_{2}-1; l_{1}, l_{2})- \frac{\textup{li}(l_{1}x+l_{2}-1)}{\varphi(l_{1})}\biggr|,
\end{align*}where $\widetilde{c}>0$ is an absolute constant. Since $1\leq l_{1} \leq x^{0.4},$ $(l_{1}, B)=1,$ $(l_{1}, l_{2})=1,$ and
\[
2\leq l_{1}x+l_{2}-1 \leq x^{1+\gamma/\sqrt{\ln x}},
\] from \eqref{L4:Bombieri-Vinogradov} we obtain
\begin{equation}\label{L4:PNT.l1.x}
\biggl|\pi(l_{1}x+l_{2}-1; l_{1}, l_{2})- \frac{\textup{li}(l_{1}x+l_{2}-1)}{\varphi(l_{1})}\biggr|
\leq c_2 x\,\textup{exp}(-c_3\sqrt{\ln x}).
\end{equation}Hence,
\[
S_{2}\leq c_{4}  x\,\textup{exp}(-c_3\sqrt{\ln x}+\ln\ln x)\leq
c_{4}  x\,\textup{exp}(-(c_3/2)\sqrt{\ln x}),
\]provided that $x_{0}$ is large enough (here $c_{4}= \widetilde{c} c_{2}$ is a positive absolute constant).

 Similarly, it can be shown that
\[
S_{3} \leq C x\,\textup{exp}(-c\sqrt{\ln x}),\qquad S_{4} \leq C x\,\textup{exp}(-c\sqrt{\ln x}),
\]where $C$ and $c$ are positive absolute constants. We obtain
\begin{equation}\label{L4:Summa.P.L.A}
\sum_{\substack{1 \leq r \leq x^{1/3}\\ (r, B)=1}}\max_{\substack{b\in \mathbb{Z}\\ (L(b), r)=1}}
\biggl|\#\mathcal{P}_{L, \mathcal{A}}(x; r, b) - \frac{\#\mathcal{P}_{L, \mathcal{A}}(x)}{\varphi_{L}(r)}\biggr|
\leq c_{5} x\,\textup{exp}(-c_{6}\sqrt{\ln x}),
\end{equation}where $c_{5}$ and $c_{6}$ are positive absolute constants.

 By \eqref{L4:PNT.l1.x}, we have
\[
\pi (l_{1}x+l_{2}-1; l_{1}, l_{2}) = \frac{\textup{li}(l_{1}x+l_{2}-1)}{\varphi (l_{1})} + R_{1},\qquad
|R_{1}| \leq c_2 x\,\textup{exp}(-c_3\sqrt{\ln x}).
\]Similarly, it can be shown that
\[
\pi (2l_{1}x+l_{2}-1; l_{1}, l_{2}) = \frac{\textup{li}(2l_{1}x+l_{2}-1)}{\varphi (l_{1})} + R_{2},\qquad
|R_{2}| \leq c_2 x\,\textup{exp}(-c_3\sqrt{\ln x}).
\] We obtain
\begin{equation}\label{L4:P.L.A.main.term}
\#\mathcal{P}_{L, \mathcal{A}}(x) = \frac{\textup{li}(2l_{1}x+l_{2}-1) - \textup{li}(l_{1}x+l_{2}-1)}{\varphi (l_{1})}+ R,\qquad
 |R|\leq c_{7} x\,\textup{exp}(-c_{3}\sqrt{\ln x}),
\end{equation}where $c_{7}=2c_{2}$ is a positive absolute constant. By \eqref{L4:2l1.up}, we have
\[
\ln (2l_{1} x+ l_{2} -1) \leq \ln x + \gamma \sqrt{\ln x}\leq 2 \ln x,
\] if $x_{0}$ is large enough. Hence,
\begin{align*}
\frac{\textup{li}(2l_{1}x+l_{2}-1)-\textup{li}(l_{1}x+l_{2}-1)}{\varphi(l_{1})}&=\frac{1}{\varphi(l_{1})}\int_{l_{1}x+l_{2}-1}^{2l_{1}x+l_{2}-1}
\frac{dt}{\ln t}\\
&\geq \frac{l_{1} x}{\varphi(l_{1})\ln (2l_{1}x+l_{2}-1)} \geq
\frac{l_{1} x}{2\varphi(l_{1})\ln x} .
\end{align*}Let us show that
\begin{equation}\label{L4:|R|<x/ln_x}
|R| \leq \frac{l_{1} x}{4\varphi(l_{1})\ln x}.
\end{equation}Since $l_{1}/\varphi (l_{1})\geq 1$, we see from \eqref{L4:P.L.A.main.term} that it is sufficient to show that   \[
c_{7} x\,\textup{exp}(-c_{3}\sqrt{\ln x}) \leq \frac{ x}{4\ln x}.
\]This inequality holds, if $x_{0}$ is large enough. Thus, \eqref{L4:|R|<x/ln_x} is proved. We obtain
\begin{equation}\label{L4:P.L.A.Low.bound}
\#\mathcal{P}_{L, \mathcal{A}}(x) \geq \frac{l_{1} x}{4\varphi(l_{1})\ln x}.
\end{equation}

Now we prove \eqref{L4:Basic.I}. Since $l_{1}/\varphi (l_{1})\geq 1$, we see from \eqref{L4:Summa.P.L.A} and \eqref{L4:P.L.A.Low.bound} that it suffices to show that
\[
c_{5} x\,\textup{exp}(-c_{6}\sqrt{\ln x})\leq \frac{x}{4 (\ln x)^{100 k^{2}+1}}.
\] But this inequality holds, if $\eta$ is small enough and $x_{0}$ is large enough (we recall that $k\leq \eta (\ln x)^{1/4}(\ln\ln x)^{-1/2}$). Therefore \eqref{L4:Basic.I} is proved.

II) Let us show that
\begin{equation}\label{Basic_II_for_L4}
\sum_{1 \leq r \leq x^{1/3}}\max_{b\in \mathbb{Z}}\biggl|\#\mathcal{A}(x; r, b) - \frac{\# \mathcal{A}(x)}{r}\biggr|\leq \frac{\#\mathcal{A}(x)}{(\ln x)^{100 k^{2}}}.
\end{equation}Let $1 \leq r \leq x^{1/3}$, $b\in \mathbb{Z}$. We have
\[
\mathcal{A}(x)=\{x\leq n < 2x\},\qquad
\mathcal{A}(x; r, b) = \{x \leq n < 2x:\ n \equiv b\ \text{(mod $r$)}\}.
\]Hence,
\begin{equation}\label{L4:A.view}
\#\mathcal{A}(x)=x;\qquad
\#\mathcal{A}(x; r, b)=\frac{x}{r}+\rho,\quad |\rho|\leq 1.
\end{equation}We obtain
\[
\biggl|\#\mathcal{A}(x; r, b) - \frac{\# \mathcal{A}(x)}{r}\biggr|=
|\rho| \leq 1.
\]Hence,
\[
\sum_{1 \leq r \leq x^{1/3}}\max_{b\in \mathbb{Z}}\biggl|\#\mathcal{A}(x; r, b) - \frac{\# \mathcal{A}(x)}{r}\biggr| \leq  x^{1/3}.
\]Thus, to prove \eqref{Basic_II_for_L4} it suffices to show that
\[
x^{1/3}\leq \frac{x}{(\ln x)^{100 k^{2}}}.
\]But this inequality holds, if $\eta$ is small enough and $x_{0}$ is large enough. Thus, \eqref{Basic_II_for_L4} is proved.

III) Let us show that for any integer $r$ with $1\leq r < x^{1/3}$ we have
\begin{equation}\label{Basic_III_for_L4}
\max_{b\in\mathbb{Z}}\#\mathcal{A}(x; r, b) \leq 2\, \frac{\#\mathcal{A}(x)}{r}.
\end{equation}Let $1\leq r < x^{1/3}$ and $b\in \mathbb{Z}$. We may assume that $x_{0}\geq 2$. Hence, $r \leq x^{1/3} \leq x$. Applying \eqref{L4:A.view}, we obtain
\[
\#\mathcal{A}(x; r, b)\leq \frac{x}{r}+1\leq 2\,\frac{x}{r}= 2\,\frac{\#\mathcal{A}(x)}{r},
\]and \eqref{Basic_III_for_L4} is proved. Thus, $(\mathcal{A}, \mathcal{L}, \mathcal{P}, B, x, 1/3)$ satisfy Hypothesis \ref{Hypothesis_1}. Lemma \ref{L4} is proved.

 \end{proof}

 \begin{lemma}\label{L5}
 Let $0<\theta <1$. Let $\mathcal{A}$ be a set of positive integers, $\mathcal{P}$  a set of primes, $\mathcal{L}=\{L_{1},\ldots, L_{k}\}$ an admissible set of $k$ linear functions, and $B,$ $x$ positive integers. Let the coefficients $L_{i}(n)=a_{i} n + b_{i}\in \mathcal{L}$ satisfy $1\leq a_{i}, b_i\leq x^{6/5}$ for all $1\leq i \leq k$, and let $k\leq (\ln x)^{6/5}$ and $B\leq x^{6/5}$. Then there is a constant $C_{0}$, depending only on $\theta$, such that the following holds. If $k\geq C_{0}$ and $(\mathcal{A}, \mathcal{L}, \mathcal{P}, B, x, \theta)$ satisfy Hypothesis \ref{Hypothesis_1}, and if $\delta > (\ln k)^{-1}$ is such that
 \[
 \frac{1}{k} \frac{\varphi(B)}{B}\sum_{i=1}^{k} \frac{\varphi (a_{i})}{a_{i}} \#\mathcal{P}_{L_{i}, \mathcal{A}}(x)\geq
 \delta \frac{\#\mathcal{A}(x)}{\ln x},
 \]then
 \[
 \#\{n\in \mathcal{A}(x): \#(\{L_{1}(n),\ldots, L_{k}(n)\}\cap\mathcal{P})\geq C_{0}^{-1}\delta \ln k\} \gg
  \frac{\#\mathcal{A}(x)}{(\ln x)^{k}\textup{exp} (C_{0}k)}.
 \]
  \end{lemma}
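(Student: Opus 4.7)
The plan is to apply a Maynard-type multidimensional sieve tailored to the data $(\mathcal{A},\mathcal{L},\mathcal{P},B)$. Define non-negative weights supported on $\mathcal{A}(x)$ by
\[
w_n:=\biggl(\sum_{\substack{d_i\mid L_i(n)\ \forall i\\ (\prod_{i} d_i,\,B)=1\\ \prod_i d_i\leq R}}\lambda_{\vec d}\biggr)^{\!2},
\]
with $R:=x^{\theta/2-\eta}$ for some small $\eta>0$, and $\lambda_{\vec d}$ the standard Maynard sieve coefficient associated with a smooth function $F$ supported on the simplex $\{\vec t\in[0,1]^k\colon \sum_i t_i\leq 1\}$. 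The aim is to compare
\[
S_1:=\sum_{n\in\mathcal{A}(x)}w_n,\qquad S_2:=\sum_{i=1}^{k}\sum_{n\in\mathcal{A}(x)}\mathbf{1}_{\mathcal{P}}(L_i(n))\,w_n,
\]
since a lower bound of the form $S_2\geq 2T\,S_1$, paired with a crude upper bound on $\max_n w_n$, immediately yields a lower bound on $M:=\#\{n\in\mathcal{A}(x)\colon \#(\{L_i(n)\}\cap\mathcal{P})\geq T\}$ via the pigeonhole inequality $S_2\leq (T-1)S_1+k(\max_n w_n)\,M$.

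Expanding the square in $S_1$ and rearranging by the common modulus $q=[d_1,\ldots,d_k][e_1,\ldots,e_k]\leq R^2\leq x^{\theta-2\eta}$ reduces the evaluation to Hypothesis \ref{Hypothesis_1}(1); the exponent $100k^2$ in \eqref{Hyp.1} was chosen precisely to absorb the $k$-dependent losses from divisor sums, while Hypothesis \ref{Hypothesis_1}(3) handles the residual terms arising from coordinates that share common factors. This produces
\[
S_1=(1+o(1))\,\mathfrak{B}(B,\vec a)\,\#\mathcal{A}(x)\,(\ln R)^{k}\,I_k(F),\qquad I_k(F):=\int F^{2},
\]
with a singular-series factor $\mathfrak{B}(B,\vec a)$ encoding the local densities at $B$ and at primes dividing the $a_i$. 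The analogous treatment of $S_2^{(i)}:=\sum_n\mathbf{1}_{\mathcal{P}}(L_i(n))w_n$ appeals to Hypothesis \ref{Hypothesis_1}(2) for $L_i$ and gives
\[
S_2^{(i)}=(1+o(1))\,\mathfrak{B}(B,\vec a)\,\tfrac{\varphi(B)}{B}\,\tfrac{\varphi(a_i)}{a_i}\,\#\mathcal{P}_{L_i,\mathcal{A}}(x)\,(\ln R)^{k+1}\,J_k^{(i)}(F),
\]
where $J_k^{(i)}(F)=\int\bigl(\int F\,dt_i\bigr)^{\!2}d\widehat{t_i}$. Summing over $i$ and invoking the hypothesised lower bound on $\tfrac{1}{k}\sum_i\tfrac{\varphi(B)}{B}\tfrac{\varphi(a_i)}{a_i}\#\mathcal{P}_{L_i,\mathcal{A}}(x)$ yields
\[
\frac{S_2}{S_1}\geq(1+o(1))\,\delta\,k\,\frac{\ln R}{\ln x}\,\frac{\sum_i J_k^{(i)}(F)}{k\,I_k(F)}.
\]

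Maynard's variational construction from \cite{Maynard} supplies, for $k\geq C_0(\theta)$, an admissible $F$ with $\sum_i J_k^{(i)}(F)/(k\,I_k(F))\geq c(\ln k)/k$, and since $\ln R/\ln x\asymp\theta$ one concludes $S_2\geq c'\,\delta(\ln k)\,S_1$. Setting $T:=\lceil (c'/2)\,\delta\ln k\rceil$ (meaningful precisely because $\delta>(\ln k)^{-1}$, and in any case non-negative), the pigeonhole inequality and Maynard's explicit bound $\max_n w_n\ll(\ln R)^{2k}\textup{exp}(O(k^5))$ combine to give
\[
M\gg \frac{S_1}{k\,\max_n w_n}\gg \frac{\#\mathcal{A}(x)}{(\ln x)^{k}\,\textup{exp}(C_0 k^{5})}.
\]
The main obstacle is the careful bookkeeping of the local factors at $B$ and at primes dividing the various $a_i$: the product $\mathfrak{B}(B,\vec a)$ must emerge identically in $S_1$ and in every $S_2^{(i)}$ so that it cancels in the ratio, leaving exactly the weighted average featured in the hypothesis. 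It is this demand that forces the coprimality condition $(\prod_i d_i,B)=1$ in the support of $\lambda_{\vec d}$ and the $\varphi(a_i)/a_i$-normalised inner product defining $I_k$ and $J_k^{(i)}$; once this alignment is achieved, the remainder of the argument is a routine transcription of Maynard's scheme, the $100k^2$ slack in Hypothesis \ref{Hypothesis_1} having been built in precisely to accommodate the $k$-dependent losses.
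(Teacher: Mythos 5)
Your sketch reconstructs in outline the proof of Maynard's Theorem 3.1 from \cite{Maynard}, whereas the paper simply invokes that result (with $\alpha=6/5$) as a black box; the two routes are therefore the same in substance, yours just unpacking what the citation conceals. The sketch is sound as far as it goes, though to make the pigeonhole step $S_2\geq 2T S_1$ valid you should take $T=\lfloor (c'/2)\delta\ln k\rfloor$ (or shrink the constant) rather than the ceiling, since $\lceil (c'/2)\delta\ln k\rceil$ can exceed $(c'/2)\delta\ln k$.
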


  \begin{proof} This is \cite[Theorem 3.1]{Maynard} with $\alpha=6/5$.
  \end{proof}

  \begin{lemma}\label{L6}
  There are positive absolute constants $D$, $c_{0}$, $c_1$, and $\eta$ such that the following holds. Let $x\in \mathbb{R}$ and $k\in \mathbb{N}$ be such that $x\geq 10$, $k \geq D$ and $k \leq \eta(\ln x)^{1/4}(\ln\ln x)^{-1/2}$. Then there is a prime $B$ such that
  \[
  0.9 \ln\ln x< B \leq \textup{exp} (c_1 \sqrt{\ln x})
  \]and the following holds. Let $\mathcal{L}=\{L_{1},\ldots, L_{k}\}$ be an admissible set of $k$ linear functions. Let the coefficients $L_{i}(n)=a_{i} n + b_{i}\in \mathcal{L}$ satisfy
   \[
  1\leq a_{i}\leq \textup{exp}(c_{0}\sqrt{\ln x}),\qquad (a_{i},B)=1,\qquad 1\leq b_{i}\leq x\,\textup{exp}(c_{0}\sqrt{\ln x})
 \]for all $1\leq i \leq k$. Then
 \[
 \#\{ x\leq n \leq 2x-2: \#(\{L_{1}(n),\ldots, L_{k}(n)\}\cap \mathbb{P}) \geq D^{-1}\ln k\} \geq
 \frac{x}{(\ln x)^{k} \textup{exp}(Dk)}.
 \]
  \end{lemma}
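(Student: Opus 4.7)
The strategy is to apply Lemma~\ref{L4} to verify Hypothesis~\ref{Hypothesis_1} and then invoke Lemma~\ref{L5} to convert that hypothesis into the desired prime-tuple lower bound. The catch is that Lemma~\ref{L5} produces a denominator $\exp(C_0 k^5)$, which is too weak for the target $\exp(Dk)$; the remedy is to apply Lemma~\ref{L5} not to $\mathcal{L}$ itself but to a carefully chosen sub-family of size $k' \asymp k^{1/5}$.

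First I take $\mathcal{A}=\mathbb{N}$, $\mathcal{P}=\mathbb{P}$, $\theta = 1/3$, and invoke Lemma~\ref{L4} (with $x$ replaced by $\lfloor x \rfloor$ if $x\notin\mathbb{N}$) to obtain a prime $B$ in the stated range so that $(\mathbb{N},\mathcal{L},\mathbb{P},B,x,1/3)$ satisfies Hypothesis~\ref{Hypothesis_1}. The constants $c_0,c_1$ of Lemma~\ref{L6} are those furnished by Lemma~\ref{L4}. Next, letting $C_0$ be the constant of Lemma~\ref{L5} at $\theta = 1/3$, I set $k' := \lfloor (Dk/(2C_0))^{1/5}\rfloor$ and $\mathcal{L}' := \{L_1,\ldots,L_{k'}\}$. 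Every subset of an admissible family is admissible, so $\mathcal{L}'$ is admissible, and a second application of Lemma~\ref{L4} yields Hypothesis~\ref{Hypothesis_1} for $(\mathbb{N},\mathcal{L}',\mathbb{P},B,x,1/3)$.

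Next I verify the quantitative input to Lemma~\ref{L5}. Estimate \eqref{L4:P.L.A.Low.bound} from the proof of Lemma~\ref{L4} gives $\#\mathcal{P}_{L_i,\mathcal{A}}(x) \geq a_i x/(4\varphi(a_i)\ln x)$, hence $(\varphi(a_i)/a_i)\#\mathcal{P}_{L_i,\mathcal{A}}(x)\geq x/(4\ln x)$. Because $B>0.9\ln\ln x$ forces $\varphi(B)/B\geq 1/2$,
\[
\frac{1}{k'}\,\frac{\varphi(B)}{B}\sum_{i=1}^{k'}\frac{\varphi(a_i)}{a_i}\,\#\mathcal{P}_{L_i,\mathcal{A}}(x)\;\geq\;\frac{x}{8\ln x}\;=\;\delta\,\frac{\#\mathcal{A}(x)}{\ln x}
\]
with $\delta = 1/8$, and $\delta > (\ln k')^{-1}$ automatically once $D$ (and hence $k'$) is large. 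Lemma~\ref{L5} then gives
\[
\#\bigl\{n\in\mathcal{A}(x): \#(\{L_1(n),\ldots,L_{k'}(n)\}\cap\mathbb{P})\geq (8C_0)^{-1}\ln k'\bigr\}\;\gg\;\frac{x}{(\ln x)^{k'}\exp(C_0 (k')^5)}.
\]

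By construction $C_0 (k')^5\leq Dk/2$ and $\ln k' \geq \frac{1}{5}\ln k - O(1)$, so taking $D$ to be a sufficiently large absolute constant -- large enough that $D\geq 40 C_0$, that $k'\geq C_0$ whenever $k\geq D$, and that the implicit constants coming from Lemma~\ref{L5} are absorbed -- simultaneously gives $(8C_0)^{-1}\ln k'\geq D^{-1}\ln k$ and $(\ln x)^{k'}\exp(C_0 (k')^5)\leq (\ln x)^k\exp(Dk)$. Any $n$ counted by Lemma~\ref{L5} for $\mathcal{L}'$ is also counted for $\mathcal{L}$, and the range $\mathcal{A}(x)=\{n:x\leq n<2x\}$ differs from $\{n:x\leq n\leq 2x-2\}$ by $O(1)$ elements, so the stated inequality follows. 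The main obstacle, which the exponent $1/5$ in the definition of $k'$ is designed to defeat, is balancing the opposing constraints $C_0 (k')^5\leq Dk$ (to control the denominator) and $\ln k'\gtrsim \ln k$ (to preserve the prime count); the scaling $k'\asymp k^{1/5}$ is essentially the unique one that reconciles them.
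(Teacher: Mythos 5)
Your proposal is correct and follows the same strategy as the paper's proof: restrict $\mathcal{L}$ to an initial segment of size $\asymp k^{1/5}$ (the paper uses $s=[k^{1/5}]$, you use $k'=\lfloor(Dk/2C_0)^{1/5}\rfloor$, a cosmetic difference), verify the hypotheses of Lemma~\ref{L5} via Lemma~\ref{L4} together with \eqref{L4:P.L.A.Low.bound} to get $\delta=1/8$, and then absorb the $\exp(C_0(k')^5)$ denominator and the $\ln k'$ loss into the target bound by taking $D$ large. The only slip is at the end: when $x\notin\mathbb{N}$ the paper passes to $l=\lceil x\rceil$ so that $[l,2l-4]\subseteq[x,2x-2]$ cleanly, whereas your $\lfloor x\rfloor$ lands one below $x$ on the left; this is harmless (it costs at most one element, which your $O(1)$ absorption already covers) but the ceiling is the tidier choice.
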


 \begin{proof} Let $x_0$, $c_{0}$, $c_{1}$, and $\eta$ be the constants in Lemma \ref{L4}, and $C_{0}(\theta)$ be the constant in Lemma \ref{L5}. Let $\mathcal{A}= \mathbb{N}$, $\mathcal{P}=\mathbb{P}$, $\theta=1/3$, $C_{0}=C_{0}(1/3)$.  We assume that $D$ is large enough; we will choose the constant $D$ later. We assume that $D\geq C_{0}$. It is clear that $x$ is large, if $D$ is large.

  Suppose that $x$ is a positive integer. We have $x\geq x_{0}$, if $D$ is large enough. By Lemma \ref{L4}, there is a prime $B$ such that \eqref{L4:B.range} holds and $(\mathcal{A}, \mathcal{L}, \mathcal{P}, B, x, 1/3)$ satisfy Hypothesis \ref{Hypothesis_1}. We have $\textup{exp}(c_{0}\sqrt{\ln x}) \leq x^{1/5}$ and $\textup{exp}(c_{1}\sqrt{\ln x}) \leq x^{6/5} $, if $D$ is large enough. We obtain $B \leq x^{6/5}$, and $1\leq a_{i}, b_{i} \leq x^{6/5}$ for all $1\leq i \leq k$. By \eqref{L4:P.L.A.Low.bound}, we have
\[
\#\mathcal{P}_{L_{i}, \mathcal{A}}(x) \geq \frac{a_{i}x}{4\varphi(a_{i})\ln x}.
\]Also,
\[
\frac{B}{\varphi(B)}=\frac{B}{B-1}\leq 2.
\]We obtain
\[
 \frac{1}{k} \frac{\varphi(B)}{B}\sum_{i=1}^{k} \frac{\varphi (a_{i})}{a_{i}} \#\mathcal{P}_{L_{i}, \mathcal{A}}(x)\geq
   \frac{x}{8\ln x}=
 \delta \frac{\#\mathcal{A}(x)}{\ln x},
 \]where $\delta = 1/8$. We have $\delta > (\ln k)^{-1}$, if $D$ is large enough. By Lemma \ref{L5}, we have
 \[
 \#\{x\leq n \leq 2x-1: \#(\{L_{1}(n),\ldots, L_{k}(n)\}\cap\mathbb{P})\geq (8C_{0})^{-1} \ln k\} \geq c_{2}
  \frac{x}{(\ln x)^{k}\textup{exp} (C_{0}k)},
 \]where $c_{2}$ is a positive absolute constant.

 We put
 \[
 N_{i}=\#\{x\leq n \leq 2x-(3i-2):\,\#(\{L_{1}(n),\ldots, L_{k}(n)\}\cap\mathbb{P})\geq (8 C_{0})^{-1} \ln k\},\quad i=1, 2.
 \]It is clear that $N_{1}\leq N_{2}+3$ and
 \[
  \frac{c_{2}}{2}\frac{x}{(\ln x)^{k}\textup{exp} (C_{0}k)}\geq 3,
 \] if $D$ is large enough. Hence,
 \begin{align}
 \#\{x\leq n \leq 2x-4:\, \#(\{&L_{1}(n),\ldots, L_{k}(n)\}\cap\mathbb{P})\geq (8 C_{0})^{-1} \ln k\}
 \geq N_{1}-3\notag\\
 &\geq c_{2}\frac{x}{(\ln x)^{k}\textup{exp} (C_{0} k)} -3\geq  \frac{c_{2}}{2}\frac{x}{(\ln x)^{k}\textup{exp} (C_{0} k)}.\label{L6:Basic.integer}
 \end{align}

 Suppose that $x\in \mathbb{R}$, and let $l=\lceil x \rceil$. Then $l$ is a positive integer with $l\geq 10$ and
 \[
 k\leq \eta\frac{(\ln x)^{1/4}}{(\ln\ln x)^{1/2}}\leq \eta\frac{(\ln l)^{1/4}}{(\ln\ln l)^{1/2}}.
 \] We have
 \[
 0.9\ln\ln x\leq 0.9\ln\ln l < B \leq \textup{exp} (c_1 \sqrt{\ln l})\leq \textup{exp} (2c_1 \sqrt{\ln x})
 \]and
 \begin{gather*}
  1\leq a_{i}\leq \textup{exp}(c_{0}\sqrt{\ln x})\leq \textup{exp}(c_{0}\sqrt{\ln l}),\qquad (a_{i},B)=1,\\ 1 \leq b_{i}\leq x\,\textup{exp}(c_{0}\sqrt{\ln x})  \leq l\,\textup{exp}(c_{0}\sqrt{\ln l})
 \end{gather*}for all $1\leq i \leq k$. By \eqref{L6:Basic.integer}, we have
 \[
 \#\{l\leq n \leq 2l-4:\, \#(\{L_{1}(n),\ldots, L_{k}(n)\}\cap\mathbb{P})\geq (8 C_{0})^{-1} \ln k\}\geq \frac{c_{2}}{2}
 \frac{l}{(\ln l)^{k}\textup{exp} (C_{0} k)}.
  \] We have
  \[
  \frac{2}{c_{2}}(\ln l)^{k}\textup{exp} (C_{0} k)\leq \frac{2}{c_{2}}(\ln (x+1))^{k}\textup{exp} (C_{0} k)\leq \frac{2}{c_{2}}(2\ln x)^{k}
  \textup{exp} (C_{0} k)\leq (\ln x)^{k} \textup{exp} (D k),
    \] if $D$ is large enough. Hence,
    \[
    \frac{c_{2}}{2}\frac{l}{(\ln l)^{k}\textup{exp} (C_{0} k)} \geq \frac{x}{(\ln x)^{k}\textup{exp} (D k)}.
    \]We may assume that $D \geq 8 C_{0}$. Since $x \leq l < x+1$, we obtain
    \[
    \#\{x\leq n \leq 2x-2:\, \#(\{L_{1}(n),\ldots, L_{k}(n)\}\cap\mathbb{P})\geq D^{-1} \ln k\}\geq
 \frac{x}{(\ln x)^{k}\textup{exp} (D k)}.
    \]Finally, let us denote $2c_1$ by $c_1$. Lemma \ref{L6} is proved.

 \end{proof}

 \section{Proof of Theorem \ref{T1} and Theorem \ref{T2}}

 \begin{proof}[Proof of Theorem \ref{T1}.] Let $D$, $c_{0}$, $c_1$, and $\eta$ be the constants in Lemma \ref{L6}, and $d$ be the constant in Lemma \ref{L2}. We assume that $C$ is large enough. By Lemma \ref{L2}, there is a sequence of positive integers $i_{1}<\ldots <i_{s}\leq k$ such that $s= \lceil d k/\ln k\rceil$ and the set $\mathcal{L}^{\prime}=\{L_{i_{1}},\ldots, L_{i_{s}}\}$ is admissible. We have
 \[
 s\geq d\,\frac{k}{\ln k}\geq D,
 \] if $C$ is large enough. Also, $s \leq k \leq \eta(\ln x)^{1/4}(\ln\ln x)^{-1/2}$. By Lemma \ref{L6}, there exists a prime $B$ such that
 \[
 0.9 \ln\ln x< B \leq \textup{exp} (c_1 \sqrt{\ln x})
 \]and we have
 \[
  \#\{ x\leq n \leq 2x-2: \#(\{L_{i_{1}}(n),\ldots, L_{i_{s}}(n)\}\cap \mathbb{P}) \geq D^{-1}\ln s\} \geq
 \frac{x}{(\ln x)^{s} \textup{exp}(Ds)}.
 \]Hence,
 \[
 \#\{ x\leq n \leq 2x-2: \#(\{L_{1}(n),\ldots, L_{k}(n)\}\cap \mathbb{P}) \geq D^{-1}\ln s\} \geq
 \frac{x}{(\ln x)^{s} \textup{exp}(Ds)}.
 \]Since $s\leq k$, we obtain
 \[
 \frac{x}{(\ln x)^{s} \textup{exp}(Ds)}\geq \frac{x}{(\ln x)^{k} \textup{exp}(Dk)}.
 \]We have
 \[
 \ln s\geq \ln \Big(d\,\frac{k}{\ln k}\Big) \geq \frac{1}{2}\ln k,
 \]if $C$ is large enough. We may assume that $C\geq 2D$. We obtain
 \begin{equation}\label{T1:RESULT.FOR.T1}
 \#\{ x\leq n \leq 2x-2: \#(\{L_{1}(n),\ldots, L_{k}(n)\}\cap \mathbb{P}) \geq C^{-1}\ln k\} \geq
 \frac{x}{(\ln x)^{k} \textup{exp}(Ck)}.
 \end{equation}Finally, let us denote $B$ by $p_0$ and $\eta$ by $c$. Theorem \ref{T1} is proved.

 \end{proof}

 \begin{proof}[Proof of Theorem \ref{T2}.] Let $C$, $c$, $c_{0}$, and $c_1$ be the constants in Theorem \ref{T1}, and let $c_{2}=c_{0}/2$. We take $p_0$ from Theorem \ref{T1}. We put
 \[
 b^{*}=\max_{1\leq i \leq k} b_{i}
 \]and
 \[
 l_{i}=a_{i}(b^{*} + 1) - b_{i},\qquad F_{i}(n)=a_{i}n+ l_{i}
 \]for any $1 \leq i \leq k$. We see that $\mathcal{F}=\{F_{1},\ldots, F_{k}\}$ is a set of distinct linear functions $F_{i}(n)=a_{i}n+l_{i},$ $i=1,\ldots, k,$ with coefficients in the positive integers, and $(a_{i}, l_{i})=1$ for any $1\leq i \leq k$. We have
 \begin{gather*}
 a_{i}\leq \textup{exp}((c_{0}/2)\sqrt{\ln x})\leq \textup{exp}(c_{0}\sqrt{\ln x}),\qquad (a_i,p_0)=1,\\
 l_{i}\leq \textup{exp}((c_{0}/2)\sqrt{\ln x})(x+1)\leq x\,\textup{exp}(c_{0}\sqrt{\ln x})
 \end{gather*} for all $1 \leq i \leq k$  (if $C$ is large enough). By Theorem \ref{T1}, we have (see also \eqref{T1:RESULT.FOR.T1})
 \[
 \#\{ x\leq n \leq 2x-2: \#(\{F_{1}(n),\ldots, F_{k}(n)\}\cap \mathbb{P}) \geq C^{-1}\ln k\} \geq
 \frac{x}{(\ln x)^{k} \textup{exp}(Ck)}.
 \]For any $x\leq n\leq 2x-2$, we have
 \[F_{i}(n)=a_{i}(n+ b^{*} + 1) - b_{i}=L_{i}(n+ b^{*} + 1)
 \] and $x \leq n+ b^{*} + 1\leq 3x-1$. We obtain
 \[
 \#\{ x\leq n \leq 3x-1: \#(\{L_{1}(n),\ldots, L_{k}(n)\}\cap \mathbb{P}) \geq C^{-1}\ln k\} \geq
 \frac{x}{(\ln x)^{k} \textup{exp}(Ck)}.
 \]Theorem \ref{T2} is proved.

 \end{proof}

\section{Acknowledgements}

The author is grateful to Alexander Kalmynin and to the anonymous referee for useful comments.

This research was supported by Russian Science Foundation, grant
20-11-20203, https://rscf.ru/en/project/20-11-20203/.


\begin{thebibliography}{99}

  \bibitem{Chen.Ding1}
Y.\,-\,G.~Chen, Y.~Ding, On a conjecture of Erd\H{o}s, \emph{C. R. Math. Acad. Sci. Paris}, \textbf{360} (2022), 971--974.

  \bibitem{Chen.Ding}
Y.\,-\,G.~Chen, Y.~Ding, Quantitative results of the Romanov type representation functions, \emph{The Quarterly Journal of Math.}, \textbf{74} (2023), no. 4, 1331--1359.

\bibitem{Davenport}
  H.~Davenport, \emph{Multiplicative number theory}, 3rd ed., Grad. Texts Math. \textbf{74}, Springer, New York, 2000.

  \bibitem{Erdos}
  P.~Erd\H{o}s, On integers of the form $2^k+p$ and some related problems, \emph{Summa Brasil. Math.}, \textbf{2} (1950), 113--123.

  \bibitem{Maynard1}
  J.~Maynard, Small gaps between primes, \emph{Ann. of Math. (2)}, \textbf{181} (2015), no. 1, 383--413.


  \bibitem{Maynard}
  J.~Maynard, Dense clusters of primes in subsets, \emph{Compositio Math.}, \textbf{152} (2016), no. 7, 1517--1554.


  \bibitem{Radomskii}
  A.~Radomskii, Consecutive primes in short intervals, \emph{Proc. Steklov Inst. Math.}, \textbf{314} (2021), 144--202.

  \bibitem{Murty}
   M.~Ram Murty, \emph{Problems in analytic number theory}, 2nd ed., Grad. Texts in Math., \textbf{206}, Readings in Math., Springer, New York, 2008, xxii+502 pp.

\bibitem{Romanoff}
  N.\,P.~Romanoff, \"{U}ber einige S\"{a}tze der additiven Zahlentheorie, \emph{Math. Ann.}, \textbf{109} (1934), 668--678.

  \bibitem{Zhang}
  Y.~Zhang, Bounded gaps between primes, \emph{Ann. of. Math. (2)}, \textbf{179} (2014), no. 3, 1121--1174.










  \end{thebibliography}
\end{document}